\newcommand{\ext}{\operatorname{ext}}
\newcommand{\Lip}{\operatorname{Lip}}
\newcommand{\conv}{\text{conv}}
\title{On extreme points and representer theorems for the Lipschitz unit ball on finite metric spaces}
\author{\firstname{Kristian} \lastname{Bredies}\CDRorcid{0000-0001-7140-043X}}
\address{Institute of Mathematics and Scientific   Computing, University of Graz, Graz, Austria}
\email[K. Bredies]{kristian.bredies@uni-graz.at}
\author{\firstname{Jonathan} \lastname{Chirinos Rodriguez*}\CDRorcid{0000-0003-4107-5108}}
\address{MaLGa, DIMA, Università degli Studi di Genova and Camelot Biomedical Systems, Genova, Italia}
\email[J. Chirinos Rodriguez]{jonathan.chirinosrodriguez@edu.unige.it}
\author{\firstname{Emanuele} \lastname{Naldi}\CDRorcid{0000-0002-2437-2832}}
\address{Institute  of  Analysis  and  Algebra,  TU  Braunschweig,  Braunschweig,  Germany}
\email[E. Naldi]{e.naldi@tu-braunschweig.de}
\subjclass{46A55, %
	46N10, %
	52A05%
}
\begin{abstract}
	In this note, we provide a characterization for the set of extreme points of the Lipschitz unit ball in a specific vectorial setting. While the analysis of the case of real-valued functions is covered extensively in the literature, no information about the vectorial case has been provided up to date. Here, we aim at partially filling this gap by considering functions mapping from a finite metric space to a strictly convex Banach space that satisfy the Lipschitz condition. As a consequence, we present a representer theorem for such functions. In this setting, the number of extreme points needed to express any point inside the ball is independent of the dimension, improving the classical result from Carathéodory.
\end{abstract}
\begin{document}

\maketitle

\section{Introduction}
Let $(\mathcal{X}, d)$ be a metric space, $(\mathcal{Y}, \|\cdot\|)$ a non-trivial, strictly convex real Banach space and consider the Banach space of Lipschitz functions \cite{Weaver} from $\mathcal{X}$ to $\mathcal{Y}$ that vanish at the distinct point $x_0\in \mathcal{X}$. Let $L\geq 0$. We define the set
$$
\Lip^L_0 =\left\{f:\mathcal{X}\to\mathcal{Y}  \mid  f(x_0)=0 \text{ and } \|f(x)-f(y)\|\leq Ld(x, y) \text{ for all } x, y\in\mathcal{X} \right\}.
$$
In this paper, we want to study certain structural properties of the special case $\Lip_0^1$ (since all of the other cases can be treated analogously). In particular, we are interested in characterizing the set of its extreme points. The set of extreme points of the space $\Lip^1_0$ above mentioned has been widely studied in the past decades for real-valued functions $f:\mathcal{X}\to \mathbb{R}$ (see \cite{cobz,Farmer94,raoashoke,rolew,ashoke,smarz}), and more recently in \cite{alipazan}, but no information about the set $\ext(\Lip^1_0)$ has been provided in the case of $\mathcal{Y}\neq \mathbb{R}$. In this paper, we show that the latter space can be characterized when considering a finite metric space $\mathcal{X}=\{x_0,...,x_n\}$, being $x_0,\dots, \, x_n$, $n\geq 1$, distinct points.

The name \textit{representer theorems} was introduced in the field of Machine Learning. In particular, in the context of kernel methods \cite{schsmo}. In a few words, these results show that any element of a space can be expressed by a linear combination of finitely many specific points. Recently, the study of representation results has gained popularity in the setting of variational inverse problems \cite{boycham,breca2020,unser2020}. Moreover, and motivated by the so-called Minkowski--Carathéodory theorem \cite[Theorem III.2.3.4]{hiur}, which yields representations in terms of extreme points in the finite-dimensional setting, it has been observed that there is a natural connection between extreme points and representation results \cite{duval}. As shown in \cite{brelinear,breetal22} a proper characterization of extreme points may lead to efficient optimization algorithms. For these reasons, there is increasing recent interest in characterizing extreme points associated with various regularizers, see \cite{breca2020,breca2021} and \cite{brelinear} and \cite{unfawa17,brecafa22,carioni2022extremal,iglesias2022extreme,ambrosio2022linear}. Finally, we mention that Lipschitz-type constraints have also recently become of interest in the context of plug-and-play regularization \cite{ryupnp2019} and monotone splitting algorithms \cite{pescterr2021}. Hence, a proper characterization of the extreme points of the Lipschitz unit ball may have a considerable impact. In this paper, we aim to partially fill this gap.

We present the characterization result in Theorem \ref{thm:extrpo}. Finally, we provide in Theorem \ref{thm:repr} a representer theorem for the space $\Lip_0^1 $ that improves the Minkowski--Carathéodory theorem, in the sense that we will see that the number of required extreme points is independent of the dimension of the space.

\section{Extreme points and representer theorems}
We recall that, given $C$ a convex set of a real vector space, an \emph{extreme point} of $C$ is a point $y\in C$ such that, if $y=\lambda y^1 + (1-\lambda)y^2$ with $y^1, \ y^2\in C$ and $\lambda \in (0,1)$, then $y^1=y^2=y$. In other words, an extreme point of a convex set $C$ is a point $y$ in $C$ such that $C\setminus \{y\}$ is convex. We remind the reader of the well-known fact that it is sufficient to check the extreme point condition only for $\lambda=1/2$. We denote with $\ext(C)$ the set of extreme points of $C$.

We consider, for further convenience, the following definition of the set $\Lip_0^1$.
$$
	\Lip^1_0 :=\left\{y=(y_0,\dots, \, y_n)\in\mathcal{Y}^{n+1} \, : \, y_0=0 \text{ and }  \|y_i-y_j\|\leq d(x_i,x_j), \text{ for every } i,j=1,..., \ n\right\}.
$$
Note that both of the definitions of the set $\Lip_0^1 $ are equivalent, since in this case, we are only considering the images of the finite set $\mathcal{X}=\{x_0,...,x_n\}$ through functions $f$ mapping to $\mathcal{Y}$. We provide now a preliminary lemma.

\begin{lemma}\label{lemma} Let $y\in\Lip^1_0 $. For every $i=1,..., \, n$, there exists $0=i_0, \, i_1,.., \, i_k=i$, $k\geq 1$, such that $\|y_{i_{j+1}}-y_{i_j}\|=d(x_{i_{j+1}},x_{i_j})$, for every $j=0,..., \, k-1$ if and only if there does not exist a nonempty subset $S\subset\{1,..., \,n\}$ such that $\|y_i-y_j\|<d(x_i,x_j)$, for every $i\in S$, $j\in S^c$.
\end{lemma}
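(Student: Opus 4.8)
The plan is to recast both conditions in terms of connectivity of an auxiliary graph and then to observe that each side of the equivalence is the negation of the other. I define a graph $G$ on the vertex set $\{0,1,\dots,n\}$ by joining $i$ and $j$ precisely when the Lipschitz bound is saturated, i.e.\ when $\|y_i-y_j\|=d(x_i,x_j)$; call such a pair a \emph{tight} edge. Since $\|y_i-y_j\|\le d(x_i,x_j)$ holds for every $y\in\Lip_0^1$, a pair fails to be tight exactly when the strict inequality $\|y_i-y_j\|<d(x_i,x_j)$ holds. As $\|\cdot\|$ and $d$ are symmetric, adjacency in $G$ is symmetric, so reachability is an equivalence relation and it is legitimate to speak of connected components.

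With this language, the left-hand condition says precisely that every vertex $i\in\{1,\dots,n\}$ is joined to the base point $0$ by a path in $G$ (automatically of length $k\ge 1$, since $i\neq 0$); that is, $G$ is connected. The right-hand condition, that no nonempty $S\subset\{1,\dots,n\}$ satisfies $\|y_i-y_j\|<d(x_i,x_j)$ for all $i\in S$, $j\in S^c$ with $S^c=\{0,\dots,n\}\setminus S$, says precisely that there is no nonempty $S\subseteq\{1,\dots,n\}$ across whose cut $(S,S^c)$ no tight edge runs. Since both reformulations are immediate, the substantive task reduces to proving that $G$ is connected if and only if no such separating set $S$ exists.

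For the forward implication I argue by contradiction: assume $G$ is connected but such an $S$ exists. Pick any $i\in S$, possible since $S\neq\emptyset$; since $0\in S^c$ and $G$ is connected, there is a tight path from $i$ to $0$. Traversing it, there must be a first index at which consecutive vertices lie on opposite sides of the cut, producing a tight edge between $S$ and $S^c$ and contradicting the defining property of $S$. For the converse I prove the contrapositive: if $G$ is disconnected, let $C_0$ be the connected component of $0$ and set $S=\{0,\dots,n\}\setminus C_0$. Then $S$ is nonempty and avoids $0$, so $S\subseteq\{1,\dots,n\}$, while $S^c=C_0$. Any pair $i\in S$, $j\in S^c$ lies in distinct components, hence carries no tight edge, whence $\|y_i-y_j\|<d(x_i,x_j)$; thus $S$ is a separating set and the right-hand condition fails, as required. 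Combining the two implications yields the stated equivalence.

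I expect the argument to be entirely elementary once the graph-theoretic reformulation is in place; the only points that require care are the convention that $S^c$ is taken inside the full index set $\{0,\dots,n\}$ (so that the base point $0$ always lies in $S^c$), and the appeal to symmetry that allows one to argue with undirected reachability and connected components rather than with directed paths.
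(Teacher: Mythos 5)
Your proof is correct and follows essentially the same route as the paper: your connected component $C_0$ of the base point is exactly the paper's reachable set $T$ (up to including $0$), the forward direction in both arguments finds a first tight edge crossing the cut, and the converse in both takes the complement of the reachable set as the separating $S$. The graph-theoretic language is a cosmetic repackaging — arguably slightly cleaner, since membership of $j=0$ in $S^c$ is handled uniformly by $0\in C_0$, a case the paper's path-extension argument glosses over — but the substance is identical.
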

\begin{proof}
	First, we proceed by contradiction: let $S\subset\{1,..., \, n\}$, $S\neq \emptyset$, such that $\|y_i-y_j\|<d(x_i,x_j)$ for every $i\in S$, $j\in S^c$, and let $i\in S$. By hypothesis, we can choose $k\geq 1$ with $0=i_0,..., \, i_k=i$ such that $\|y_{i_{j+1}}-y_{i_j}\|=d(x_{i_j}, x_{i_{j+1}})$ for every $j=0,...,\, k-1$. As $i_0=0\in S^c$ and $i_k=i\in S$, we derive that there must exist $j=0,..., \, k-1$ such that $i_{j+1}\in S$ and $i_j\in S^c$. It follows that $\|y_{i_{j+1}}-y_{i_j}\|<d(x_{i_{j+1}}, x_{i_{j}})$ but this contradicts the hypothesis and, hence, concludes the first part of the proof.\\ Conversely, let us consider the set
	\begin{align*}
		T=\left\{i\in\{1,..., \, n\} \, \mid \ \text{there exists } \ 0=i_0,..., \, i_k=i \, \text{ s.t. }
		\|y_{i_{\ell+1}}-y_{i_{\ell}}\|=d(x_{i_{\ell+1}},x_{i_{\ell+1}}), \, \ell=0,..., \, k-1\right\},
	\end{align*}
	and suppose that $T\neq \{1,..., \, n\}$. Define the set $S:=T^c\subset\{1,..., \, n\}$, and observe that $S\neq \emptyset$. It is left to prove that, for every $i\in S$, $j\in S^c$, $\|y_i-y_j\|< d(x_i,x_j)$. Let us suppose that there exists $i\in S$ and $j\in S^c$ such that $\|y_i-y_j\|=d(x_i,x_j)$. Since $j\in S^c$, there exists $0=i_0,..., \, i_k=j$, $k\geq 1$, such that $\|y_{i_{\ell+1}}-y_{i_{\ell}}\|=d(x_{i_{\ell}},x_{i_{\ell+1}})$ for $\ell=0,..., \, k-1$. Since, by hypothesis, we have that $\|y_i-y_j\|=d(x_i,x_j)$, and defining $i_{k+1}:=i$ we obtain a path from $0$ to $i$ satisfying the equalities for $\ell=1,...,\, k$. This implies that $i\in T=S^c$, a contradiction. We therefore have found that there exists $S\subset \{1,..., \, n\}$, $S\neq\emptyset$ such that $\|y_i-y_j\|<d(x_i,x_j)$, for every $i\in S$, $j\in S^c$.
\end{proof}
Define now the set
$$
	\begin{aligned}
		\mathcal{E}:=\displaystyle\{y\in\mathcal{Y}^{n+1} \, \mid \, y_0=0 \text{ and, for every } i & =1,..., \, n, \text{ there exists } 0=i_0,..., i_k=i, \ k\geq 1 :                  \\
		                                                                                             & \displaystyle\|y_{i_{j+1}}-y_{i_j}\|= d(x_{i_j}, x_{i_{j+1}}), \ j=0,..., \ k-1\}.
	\end{aligned}
$$
Observe that the definition of $\mathcal{E}$ is motivated by the previous lemma, since every point $y\in\mathcal{E}$ satisfies the first condition of Lemma \ref{lemma}. We are now ready to characterize the extreme points of the set $\Lip_0^1 $.
\begin{theo}\label{thm:extrpo}
	We have that $\ext (\Lip_0^1 )=\mathcal{E}$.
\end{theo}
\begin{proof}
	First, we will prove that, if $y\notin \mathcal{E}$, then $y\notin \ext(\Lip_0^1 )$. Let $y\in\Lip_0^1 $ such that $y\notin \mathcal{E}$. By the previous lemma, we get that there exists $S\subset\{1,..., \, n\}$, $S\neq \emptyset$, such that $\|y_i-y_j\|< d(x_i,x_j)$, for every $i\in S$, $j\in S^c$. Choose now
	$$
		\varepsilon=\min_{i\in S, \, j\in S^c} d(x_i,x_j)-\|y_i-y_j\|,
	$$
	and observe that $\varepsilon>0$. Moreover, choose $v\in \mathcal{Y}$ such that $\|v\|=1$ (which exists since $\mathcal{Y}$ is non-trivial) and set
	$$
		y_i^1:=
		\begin{cases}
			y_i+\varepsilon v, & \text{ if } i\in S; \\
			y_i,               & \text{else},
		\end{cases}
		\quad
		y_i^2:=
		\begin{cases}
			y_i-\varepsilon v, & \text{ if } i\in S; \\
			y_i,               & \text{else}.
		\end{cases}
	$$
	Indeed, if we define $y^k:=(y^k_0, y^k_1,..., \, y^k_n)$, $k=1, \, 2$, then $y^1\neq y^2$. Moreover, observe that
	$$
		\|y^k_i-y^k_j\|=\|y_i-y_j\|\leq d(x_i,x_j), \quad \text{for every } i, \, j\in S \text{ or } i, \, j\in S^c, \, k=1, \, 2,
	$$
	since $y\in\Lip_0^1$ and
	\begin{align*}
		\|y^k_i-y^k_j\| & =\|y_i \pm\varepsilon v-y_j\|\leq\|y_i-y_j\|+\varepsilon \leq \|y_i-y_j\|+d(x_i,x_j)-\|y_i-y_j\| \\
		                & =d(x_i,x_j),\quad \text{for } i\in S, \, j\in S^c, \, k=1, \, 2.
	\end{align*}
	Therefore, $y^k\in\Lip_0^1$, $k=1, \, 2$ and $y=\frac 1 2 y^1+\frac 1 2 y^2$, $y^1\neq y^2$. Hence, $y\notin \ext(\Lip_0^1)$ and so, $\ext(\Lip_0^1)\subset \mathcal{E}$. \\
	We would like to prove now that $\mathcal{E}\subset \ext(\Lip_0^1 )$. Let $y\in\Lip_0^1 \setminus\ext(\Lip_0^1 )$. We will prove that there exists $S\subset\{1,..., \, n\}$, $S\neq \emptyset$, such that $\|y_i-y_j\|<d(x_i,x_j)$, for every $i\in S$, $j\in S^c$. If so, by the previous lemma, this would mean that $y\notin \mathcal{E}$. Since $y\notin \ext(\Lip_0^1 )$, there exist $y^1$, $y^2\in \Lip_0^1$, $y^1\neq y^2$, such that $y=\frac 1 2 y^1 +\frac 1 2 y^2$. Now, define the set $S=\{i\in\{1,..., \, n\} \, \mid \, y_i^1\neq y_i^2\}$ and observe that it is nonempty since $y^1\neq y^2$ by hypothesis. Now, let $i\in S$, $j\in S^c$. Then,
	$$
		\left\|y_i-y_j\right\|=\left\|\frac 1 2 y_i^1 -\frac 1 2 y_j^1+\frac 1 2 y_i^2-\frac 1 2 y_j^2\right\|=\left\|\frac 1 2 y_i^1 -\frac 1 2 y_j+\frac 1 2 y_i^2-\frac 1 2 y_j\right\|.
	$$
	In order to finish the proof, define  $a:= y_i^1 - y_j$, $b:=y_i^2- y_j$, and observe that $a\neq b$. Now, we distinguish two cases: if $a$ is not proportional to $b$, we get
	$$
		\left\|y_i-y_j\right\|< \frac 1 2 \|a\| + \frac 1 2 \|b\|\leq
		d(x_i,x_j),
	$$
	since we assumed that $\mathcal{Y}$ is a strictly convex space. If they are proportional, then, by possibly interchanging $a$ and $b$, we have $b=\lambda a$ for some $\lambda\neq 1$, we can further assume that $-1\leq \lambda< 1$, and obtain that
	$$
		\left\|y_i-y_j\right\|=\left\|\frac a 2 +\frac{\lambda a} {2}\right\|\leq \frac{|1+\lambda|}{2}\|a\|<d(x_i,x_j).
	$$
	The result immediately follows.
\end{proof}

We are now ready to state the representer theorem for the space $\Lip_0^1$. In the case of $\mathcal{Y}=\mathbb{R}^d$, the Minkowski--Carathéodory theorem would imply that every function in $\Lip_0^1 $ can be represented as a convex combination of at most $nd+1$ points. We are able to improve this number up to $n+1$ points, which is independent of $d$, and covers the infinite-dimensional case as well.

\begin{theo}\label{thm:repr}
	For every $y\in\Lip_0^1 $, there exist $k\leq n+1$, $y^1,..., \, y^k\in\ext(\Lip_0^1 )$, and scalars $\lambda_1,..., \lambda_k\geq 0$ with $\sum_{i=1}^k\lambda_i=1$ such that $y=\sum_{i=1}^k\lambda_iy^i$.
\end{theo}
\begin{proof}
	Let $y\in\Lip_0^1 $ and choose $v\in\mathcal{Y}$ such that $\|v\|=1$. Define the set
	$$
		D=\left\{t=(t_0,..., \, t_n)\in\mathbb{R}^{n+1} \, \mid \, y+tv\in \Lip_0^1\right\},
	$$
	being $(y+tv)_i:=y_i+t_iv$, for every $i=0,..., \, n$. Moreover, observe that $t_0=0$ for every $t\in D$ since, if $t_0\neq 0$ then  $(y+tv)_0\neq 0$. Now, we claim that, if $t\in \ext(D)$, then $y+tv\in\ext(\Lip_0^1 )$. Indeed, if $t\in D$ and $y+tv\notin\ext(\Lip_0^1)$, then there exists a subset $S\subset\{1,..., \, n\}$, $S\neq\emptyset$, such that $\|y_i-y_j+(t_i-t_j)v\|<d(x_i,x_j)$, for every $i\in S$, $j\in S^c$. Choose
	$$
		\varepsilon=\min_{i\in S, \, j\in S^c} d(x_i,x_j)-\|y_i-y_j+(t_i-t_j)v\|,
	$$
	and observe that $\varepsilon>0$. Moreover, define
	$$
		t_i^1:=
		\begin{cases}
			t_i+\varepsilon, & \text{ if } i\in S; \\
			t_i,             & \text{else},
		\end{cases}
		\quad
		t_i^2:=
		\begin{cases}
			t_i-\varepsilon, & \text{ if } i\in S; \\
			t_i,             & \text{else}.
		\end{cases}
	$$
	With such definitions, observe that $t^1\neq t^2$. Now, $y+t^kv\in \Lip_0^1$, for $k=1, \, 2$, because
	$$
		\|y_i-y_j+(t^k_i-t^k_j)v\|=\|y_i-y_j+(t_i-t_j)v\|\leq d(x_i,x_j), \quad \text{for every } i, \, j\in S \text{ or } i, \, j\in S^c,
	$$
	since $t\in D$ and
	\begin{align*}
		\|y_i-y_j+(t^k_i-t^k_j)v\| & \leq \|y_i-y_j+(t_i-t_j)v\|+\varepsilon\|v\|                      \\
		                           & \leq \|y_i-y_j+(t_i-t_j)v\|+d(x_i,x_j) - \|y_i-y_j+(t_i-t_j)v\|   \\
		                           & =d(x_i,x_j), \quad \text{for } i\in S, \, j\in S^c, \, k=1, \, 2.
	\end{align*}
	Then, $t^1$, $t^2\in D$ and $t=\frac 1 2 t^1+\frac 1 2 t^2$, $t^1\neq t^2$, which implies that $t\notin\ext(D)$. Consequently, $t\in\ext(D)$ implies $y+tv\in\ext(\Lip_0^1 )$. Now, we show that $D$ is a nonempty, convex, compact subset of $\mathbb{R}^{n+1}$. First, note that $0\in D$ and that convexity follows from fact that $D$ is the preimage of the convex set $\Lip_0^1 $ through the affine mapping $t\mapsto y+tv$. Moreover, boundedness follows because, for every $t\in D$, we have
	\[d(x_i,x_0)\geq \|(y+tv)_i-(y+tv)_0\|=\|y_i-y_0+t_iv\|\geq |t_i|-\|y_i-y_0\| \]
	and so, for every $i=1,\dots,n$ we have that
	$$
		|t_i|\leq d(x_i,x_0) + \|y_i-y_0\| \leq 2 d(x_i,x_0).
	$$
	It is only left to prove that $D$ is closed. Let $(t^k)_{k\in\mathbb{N}}$ be a sequence in $D$ converging to some $t\in\mathbb{R}^{n+1}$. We have that
	\begin{align*}
		\|y_i-y_j+(t_i-t_j)v\| & = \|y_i-y_j+(t_i^k-t_j^k)v-(t_i^k-t_j^k)v+(t_i-t_j)v\|                           \\
		                       & \leq \|y_i-y_j+(t_i^k-t_j^k)v\|+ \|(t_i-t_j)v-(t_i^k-t_j^k)v\|                   \\
		                       & \leq d(x_i,x_j)+|t_i-t_i^k|+|t_j-t_j^k|, \quad \text{ for every } i,j=0,\dots,n.
	\end{align*}
	We obtain the result by taking limits when $k\to \infty$. By the Krein--Milman theorem, we know that $D=\overline{\conv}(\ext(D))$. Moreover, we can apply the Minkowski--Carathéodory theorem, and since  $0\in D$, $\mathrm{span} \ D\subset \{0\}\times\mathbb{R}^n$, we have $\mathrm{dim} \ \mathrm{span} \ D\leq n$. Consequently, there exist $k\leq n+1$ and scalars $\lambda_1,..., \, \lambda_k\geq 0$ with $\sum_{i=1}^k\lambda_i=1$ such that $0=\sum_{i=1}^k\lambda_it^i$, with $t^i\in\ext(D)$, $i=1,..., \, k$. Finally, by the previous claim, we know that for every $i=1,..., \, k$, if we define $y^i:= y+t^iv$, then $y^i\in\ext(\Lip_0^1 )$ and, hence
	$$
		\sum_{i=1}^k \lambda_iy^i=\sum_{i=1}^k \lambda_i (y+t^i v)=y+\left(\sum_{i=1}^k\lambda_it^i\right)v=y,
	$$
	concluding the proof.
\end{proof}

\section{Acknowledgements}
The authors would like to thank Rodolfo Assereto, Enis Chenchene for the productive discussions during the secondment period of both J.C.R. and E.N. at the University of Graz. This project has been supported by the TraDE-OPT project which received funding from the European Union’s Horizon 2020 research and innovation program under the Marie Skłodowska-Curie grant agreement No 861137.  The Institute of Mathematics and Scientific Computing at the University of Graz, with which K.B. is affiliated, is a member of NAWI Graz (\url{https://nawigraz.at/en}). This work represents only the view of the authors. The European Commission and other organizations are not responsible for any use that may be made of the information it contains.

\bibliographystyle{crplain}

\bibliography{samplebib}

\end{document}